\documentclass[11pt]{amsart}

\usepackage{amssymb, amsthm, amsmath,euscript}

\newtheorem{theorem}{Theorem}[section]

\newtheorem{lemma}[theorem]{Lemma}

\newtheorem*{theorem*}{Theorem}{\bf}{\it}
\newtheorem*{proposition*}{Proposition}{\bf}{\it}
\newtheorem*{observation*}{Observation}{\bf}{\it}
\newtheorem*{lemma*}{Lemma}{\bf}{\it}

\theoremstyle{definition}

\theoremstyle{remark}
\newtheorem{remark}[theorem]{Remark}
\newcommand{\sym}{axially-symmetric }
\newcommand{\F}{\EuScript F}
\newcommand{\HH}{\EuScript H}

\newcommand{\R}{\mathbb R}
\newcommand{\C}{\EuScript C}

\begin{document}
\title{On the  higher-dimensional harmonic analog of the Levinson log log theorem.}
\author{Alexander Logunov}
\keywords{Harmonic functions, Levinson "$\log\log$" theorem.}

\begin{abstract}
 Let $M\colon (0,1) \to [e,+\infty)$ be a decreasing function such that $\int\limits_{0}^{1}\log\log M(y)dy<+\infty$. Consider the set $H_M$ of all functions $u$ harmonic in $P:=\{
(x,y)\in \R^n:  x\in \R^{n-1}, y\in \R, |x|<1, |y|<1 \}$  and satisfying $|u(x,y)| \leq M(|y|)$. We prove that $H_M$ is a normal family in $P$.

\end{abstract}

\maketitle
\section{Preliminaries.}


 Let $P$ be a rectangle $(-a,a)\times(-b,b)$ in $\mathbb{R}^2$ and let $M:(0,b)\to [e,+\infty)$ be a decreasing function. Consider the set $\EuScript{F}_M$ of all functions $f$ holomorhpic in $P$ such that $|f(x,y)| \leq M(|y|)$, $(x,y)\in P$. The classical Levinson theorem asserts that $\F_M$ is a normal family in $P$ if $\int\limits_{0}^{b}\log\log M(y)dy<+\infty$. We refer the reader to \cite{B71, C, D96,D58,D88,G70,H71,K88,L40,M60,M76,R91,R09,R78,Y81} for various proofs, history of the question and related topics. This statement is sharp, i.e. for regular (continuous and decreasing) majorants $M$ the family $\F_M$ is normal if and only if $\int\limits_{0}^{b}\log\log M(y)dy<+\infty$ (see \cite{K88},p.379--383 and \cite{B71}).

 A function $\log^+x$ is defined by $$\log^+x= \begin{cases} \log x, \quad x\geq1
\\ 0, \quad  x\leq 1.
 \end{cases}$$
 Our result is the following theorem, which extends the Levinson $\log\log$ theorem for holomorphic functions to  harmonic  functions in $\mathbb{R}^n$, $n \geq 2$.  
\begin{theorem} \label{main}
Let $\Omega$ denote the set $\{(x,y):  x\in \R^{n-1}, y\in \R,|x|<R, |y|<H \}$, where $R$ and $H$ are some positive numbers. Suppose a function $M\!\!:(0,H) \to  \R_+$ is  decreasing and
\begin{equation} \label{loglog}
\int\limits_{0}^{H}\log^{+}\log^{+}M(y)dy<+\infty.
\end{equation}
 Then the set $\HH_M$ of all functions $u$ harmonic in $\Omega$ and satisfying $|u(x,y)| \leq M(|y|)$, $(x,y) \in \Omega$, is uniformly bounded on any compact subset of $\Omega$.
\end{theorem}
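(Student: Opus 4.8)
The proof splits into a soft estimate away from the equatorial slice $\{y=0\}$ and, near that slice, a reduction to the Levinson theorem quoted above through the gradient of $u$. Fix $R_1<R$ and $H_1<H$; it suffices to bound $\HH_M$ uniformly on $K=\{|x|\le R_1,\ |y|\le H_1\}$. If $(x_0,y_0)\in K$ and $|y_0|\ge H_2$ for a fixed $H_2\in(0,H_1)$, then $B((x_0,y_0),\delta)$ with $\delta=\tfrac12\min(H_2,R-R_1,H-H_1)$ lies in $\Omega\cap\{|y|\ge H_2/2\}$, so the mean value property gives $|u(x_0,y_0)|\le M(H_2/2)$, a constant depending only on $n,R_1,H_1,H_2,M$. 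Thus everything reduces to a uniform bound on $\{|x|\le R_1,\ 0<|y|<H_2\}$ --- the value on $\{y=0\}$ then following by continuity --- which is where \eqref{loglog} is used.

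For this I would pass to $\nabla u$. If $0<|y|<d_0:=\min(R-R_1,H-H_1)$, the ball $B((x,y),|y|/2)$ lies in $\Omega\cap\{|y'|\ge|y|/2\}$, so the interior estimates for harmonic functions give $|\nabla u(x,y)|\le (C_n/|y|)\,M(|y|/2)=:N(|y|)$ on the smaller cylinder, with $C_n$ depending only on $n$. The point is that $N$ is still decreasing and, by \eqref{loglog}, $\int_0^{d_0}\log^+\log^+N(y)\,dy<+\infty$: differentiation only inserts the factor $1/|y|$, which changes $\log^+\log^+$ by at most the integrable quantity $\log^+\log^+(1/|y|)$, so the hypothesis survives. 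Now $\nabla u$ is a harmonic gradient field (a conjugate harmonic system). For $n=2$ it is, up to conjugation, a holomorphic function $g$ with $|g|\le N(|y|)$; the Levinson theorem applied to $g$ on the rectangular cross-section makes $\{g\}$ uniformly bounded on compacts, and then integrating $\nabla u$ along a path from a point with $|y|\ge H_2$ (already controlled) to $(x,y)$ bounds $u$. For $n\ge 3$ one uses instead that $|\nabla u|^{q}$ with $q=\tfrac{n-2}{n-1}$, the sharp exponent, is subharmonic, together with a Levinson-type theorem for such functions in $\R^n$ whose majorant depends only on $\operatorname{dist}(\cdot,\{y=0\})$ and satisfies the integrability \eqref{loglog}; having bounded $|\nabla u|$ uniformly on compacts, one again integrates along paths to bound $u$ itself.

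The real obstacle is this last step in dimension $n\ge3$. A naive restriction of $u$ to a $2$-plane is not harmonic, so the classical theorem is not directly available on a slice; and one cannot quote a ``Levinson theorem for subharmonic functions'' as such, since it is false in that generality (take $\log^+|F|$ with $F$ holomorphic and $|F|\le e^{M(|y|)}$: then $\log^+|F|\le M(|y|)$ but the family need not be locally bounded). What rescues the argument is precisely that the subharmonic function one actually uses, $|\nabla u|^{q}$, descends from a conjugate harmonic system, so that harmonicity --- not merely subharmonicity --- is genuinely exploited. Making this precise, while tracking that \eqref{loglog} is preserved under the interior estimates and under the conformal normalisation of the cross-sections appearing in the Levinson argument, is the heart of the matter.
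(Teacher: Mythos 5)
Your reduction for $n=2$ is sound and is essentially the same device the paper uses after its own dimension reduction: pass to the complex derivative of a planar harmonic function, absorb the factor $1/|y|$ from the Cauchy estimates into the majorant (which survives the $\log^+\log^+$ integrability), and invoke the classical Levinson theorem. But for $n\ge 3$ your argument has a genuine gap, and you have in effect named it yourself without closing it. Domar's method, applied to a subharmonic function $v$ with $v\le \tilde M(|y|)$, requires the \emph{single}-log condition $\int\log^+\tilde M<+\infty$; the double-log conclusion for holomorphic $f$ is obtained only because one may take $v=\log|f|$, i.e.\ because the \emph{logarithm} of the function is subharmonic. The Stein--Weiss exponent gives subharmonicity of $|\nabla u|^{q}$ with $q=\frac{n-2}{n-1}$, but applying Domar to $|\nabla u|^{q}$ with majorant $N^{q}$ would demand $\int\log^+ N<+\infty$, which is not implied by \eqref{loglog}; and $\log|\nabla u|$ is \emph{not} subharmonic for general harmonic $u$ in $\R^n$, $n\ge3$ --- this is precisely the obstacle the paper singles out in its introduction. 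Your closing remark that the argument is ``rescued'' because $|\nabla u|^{q}$ descends from a conjugate harmonic system is not a proof: no Levinson-type theorem under the double-log hypothesis is available for such systems to quote, and your own counterexample ($\log^+|F|$) shows that some such additional input is indispensable. As written, the case $n\ge3$ is therefore not established.

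The paper circumvents this obstacle by a different route that you may want to compare with. To bound $u$ at a point, it first averages $u$ over rotations about the axis through that point, producing an axially-symmetric harmonic function with the same value on the axis and the same majorant. For $n=4$ the substitution $v(\rho,h)=\rho\,u(|\rho|,h)$ turns the Euler--Darboux equation into the two-dimensional Laplace equation, so $f=\partial_\rho v - i\partial_h v$ is genuinely holomorphic and the classical (complex) Levinson theorem applies to $f$ with the modified majorant $\max(100/\varepsilon,100/h)M(h/2)$; the value $u(0,h)=\partial_\rho v(0,h)$ is then read off from $f$. The cases $n=2,3$ follow by adding a fake variable, and odd $n=2k+3$ is reduced to $n=3$ via $v=\rho^k e^{ik\varphi}u(\rho,h)$ in cylindrical coordinates, recovering $u$ on the axis from $\partial^k_\rho v$. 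In other words, the harmonicity is exploited by forcing the problem back into two dimensions where a holomorphic object exists, rather than by seeking a subharmonicity substitute for $\log|\nabla u|$ in $\R^n$.
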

  This result has been proved by Dyn'kin in \cite{D96} by a different method under some stronger regularity conditions imposed on $M$. For any compact set $K\subset \Omega$ our approach provides an explicit estimate for $\sup\limits_{u\in \HH_M}\sup\limits_{K}|u|$ in terms of $M$, $K$ and $\Omega$. We obtain Theorem \ref{main} as a corollary of the "holomorphic" Levinson theorem by a reduction to axially-symmetric functions $u$. First, we prove the Theorem \ref{main} in dimension $4$, it implies the $3$-dimensional case. Then we reduce the case of odd $n$ to the case $n=3$. The case of even $n$ follows by adding the fake variable. The main obstacle, which appears in the higher-dimensional harmonic analog of the Levinson $\log\log$ theorem, is the fact that $\log|\nabla u|$ is not necesseraily subharmonic for a general harmonic function $u$ in $\mathbb{R}^n$ if $n\geq 3$.  

   Some of the proofs of the "holomorphic" Levinson $\log\log$ theorem are of complex nature, some use implicitly or explicitly  harmonic measure estimates in cusp-like domains, but most of the proofs require the monotinicity condition on $M$, except for  the brilliant idea due to Domar (see  \cite{K88},\cite{D58},\cite{D88}), which avoids any regularity assumptions on $M$, even the monotonicity. We will sketch Domar's proof in Section \ref{D}, and use it to obtain explicit uniform estimates for $\HH_M$ in higher dimensions. We don't know whether Theorem \ref{main} is valid for arbitrary majorants $M$ satisfying \eqref{loglog} (even for $n=2$). 

  For any $x,y \in \R^n$ let $d(x,y)$ denote the Euclidean distance between $x$ and $y$. For any $X,Y \subset \R^n$ we use the notation $d(X,Y)$ for $\inf\{d(x,y): x\in X, y \in Y  \}$. The symbol $\lambda_n$ will denote the $n$-dimensional Lebesgue measure in $\R^n$.

\section{Domar's argument} \label{D}
 
\begin{theorem} \label{Levinson}
 Let $f$ be a holomorphic function in a rectangle $P:={(-a,a)\times(-b, b)}$. Suppose that a function $M(y)$ satisfies $\int\limits_{-b}^{b}\log^{+}\log^{+}M(y)dy<+\infty$ and   $|f(x+iy)|\leq M(y)$ for  all $(x,y) \in P$.
Then for any compact set $K\subset P$ there exists a constant $C=C(M,d(K,\partial P))$   such that $\sup\limits_{K}|f|<C$.
 \end{theorem}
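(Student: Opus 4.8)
The plan is to run Domar's descent argument directly on the nonnegative subharmonic function $u:=\log^{+}|f|$ on $P$, which by hypothesis satisfies $u(x+iy)\le \log^{+}M(y)$ for all $(x,y)\in P$. For $s>0$ put
$$\nu(s):=\lambda_{1}\bigl(\{\,y\in(-b,b):\log^{+}M(y)>s\,\}\bigr),$$
a nonincreasing function bounded by $2b$. A direct computation (Fubini--Tonelli) shows that the hypothesis $\int_{-b}^{b}\log^{+}\log^{+}M(y)\,dy<\infty$ is equivalent to $\int_{1}^{\infty}\nu(t)\,\frac{dt}{t}<\infty$; hence
$$\Phi(T):=\nu(T)+\int_{T}^{\infty}\nu(t)\,\frac{dt}{t}$$
is finite for every $T>0$, is nonincreasing, and tends to $0$ as $T\to+\infty$ (we may assume $M$ is unbounded, otherwise there is nothing to prove). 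The final estimate for $\sup_{K}|f|$ will come out as an explicit function of $\Phi$ and $d(K,\partial P)$.

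The core of the proof is one ``descent step''. Let $z_{0}\in P$ with $u(z_{0})=T>0$, let $\rho>0$, and write $B=B(z_{0},\rho)$ for the open disc, assuming $\overline{B}\subset P$. \emph{Claim:} if $\sup_{B}u\le\tfrac{5}{4}T$, then $\rho\le\tfrac{5}{\pi}\,\nu(T/2)$. To prove it, apply the sub-mean value inequality $T\,\pi\rho^{2}\le\int_{B}u\,d\lambda_{2}$ and split the integral at the level $T/2$: the region $\{u\le T/2\}\cap B$ contributes at most $\tfrac{T}{2}\pi\rho^{2}$, and on $\{u>T/2\}\cap B$ we use $u\le\tfrac{5}{4}T$, which forces $\lambda_{2}\bigl(\{u>T/2\}\cap B\bigr)\ge\tfrac{2}{5}\pi\rho^{2}$. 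On the other hand $\{u>T/2\}\subset\{(x,y):\log^{+}M(y)>T/2\}$, and the intersection of this horizontal set with the disc $B$ has $\lambda_{2}$-measure at most $2\rho\,\nu(T/2)$; here it is essential to use the small \emph{horizontal extent} $2\rho$ of $B$ rather than the full width $2a$ of $P$, which is what makes the bound \emph{linear} in $\rho$. Comparing the two inequalities gives the claim. Contrapositively: if $\rho=\tfrac{6}{\pi}\nu(T/2)$ and $\overline{B(z_{0},\rho)}\subset P$, there is a point $z_{1}$ with $|z_{1}-z_{0}|\le\rho$ and $u(z_{1})>\tfrac{5}{4}T$.

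Iterating from a point $z_{0}\in K$ with $T_{0}:=u(z_{0})>0$ (if $|f|\le 1$ on $K$ we are done) yields, as long as the discs stay in $P$, a sequence $z_{0},z_{1},z_{2},\dots$ with $u(z_{k})\ge(5/4)^{k}T_{0}$ and $|z_{k+1}-z_{k}|\le\tfrac{6}{\pi}\nu\bigl((5/4)^{k}T_{0}/2\bigr)$. Since $\nu$ is nonincreasing, a dyadic comparison of $\sum_{k}\nu((5/4)^{k}\tau)$ with $\int_{\tau}^{\infty}\nu(t)\,\frac{dt}{t}$ gives $\sum_{k}|z_{k+1}-z_{k}|\le C\,\Phi(T_{0}/2)$ for an absolute constant $C$, and this series is finite \emph{precisely because} of the $\log\log$ hypothesis. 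Suppose now, for contradiction, that $T_{0}$ is so large that $C\,\Phi(T_{0}/2)<\tfrac{1}{2}\,d(K,\partial P)$. Then the radii used are all $\le C\Phi(T_{0}/2)$ and the total displacement is $\le C\Phi(T_{0}/2)$, so an easy induction shows every disc stays at distance $>\tfrac{1}{2}d(K,\partial P)$ from $\partial P$; thus $\{z_{k}\}$ is Cauchy with limit $z_{\ast}\in P$, while $u(z_{k})\to+\infty$ --- impossible, since $u=\log^{+}|f|$ is continuous, hence bounded, on a compact neighbourhood of $z_{\ast}$ in $P$. Therefore $T_{0}\le 2\,\Phi^{-1}\!\bigl(d(K,\partial P)/(2C)\bigr)$, which yields the asserted bound with $C(M,d(K,\partial P)):=\exp\!\bigl(2\,\Phi^{-1}(d(K,\partial P)/(2C))\bigr)$.

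The single genuine obstacle is the linear-in-$\rho$ measure estimate $\lambda_{2}(\{u>T/2\}\cap B)\le 2\rho\,\nu(T/2)$: the naive global bound $\lambda_{2}(\{u>T/2\})\le 2a\,\nu(T/2)$ would only give $\rho\lesssim\nu(T/2)^{1/2}$ and hence the divergent series $\sum_{k}\nu((5/4)^{k}T_{0})^{1/2}$, so one must retain the intersection with the disc throughout. Everything else is routine bookkeeping: fixing the numerical relation between the jump ratio $5/4$, the splitting level $T/2$ and the constant $C$; the Fubini--Tonelli identity for $\Phi$; the definition of a generalized inverse $\Phi^{-1}$; and the induction keeping the discs inside $P$. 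Finally, note that holomorphy of $f$ is never used --- only subharmonicity and nonnegativity of $\log^{+}|f|$ --- which is consistent with Domar's point that no regularity of $M$, not even monotonicity, is required.
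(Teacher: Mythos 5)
Your proposal is correct and is essentially the paper's own proof: it is Domar's descent argument, with the same key point (the linear-in-$\rho$ estimate $\lambda_2(\{u>T/2\}\cap B)\le 2\rho\,\nu(T/2)$ coming from the horizontal extent of the disc, which the paper writes as the product of the two coordinate projections), the same dyadic reformulation of the $\log\log$ condition, and the same iteration-to-a-contradiction; only the bookkeeping constants (jump ratio $5/4$ versus $2$, $\log^{+}|f|$ versus $\log|f|$) differ.
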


     Theorem \ref{Levinson} immediately follows from the next lemma on subharmonic functions, since $\log |f|$ is subharmonic.
\begin{lemma} \label{subharmonic lemma}
 Let $v$ be a subharmonic function in a rectangle $P:={(-a,a)\times(-b, b)}$. Suppose that a function $\tilde M$ satisfies $\int\limits_{-b}^{b}\log^{+}\tilde M(y)dy<+\infty$ and   $v(x+iy)\leq \tilde M(y)$ for  all $(x,y) \in P$.
 Then for any compact set $K\subset P$ there exists a constant $C=C(\tilde M,d(K,\partial \Omega))$   such that $\sup\limits_{K}v\leq C$.
\end{lemma}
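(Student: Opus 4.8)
The plan is to prove Lemma \ref{subharmonic lemma} by Domar's argument, which controls the size of a subharmonic function through the size of the set where it is large. First I would reduce to the case where $\tilde M$ is, say, lower semicontinuous and, after passing to a slightly smaller rectangle, finite and integrable in the sense that $\int \log^+\tilde M(y)\,dy < \infty$ over the relevant $y$-range. Fix a compact $K \subset P$ and let $\delta = d(K,\partial P)/2$; it suffices to bound $v$ at a typical point, so fix $p_0 \in K$ and a small disk $D(p_0,r) \subset P$ with $r$ comparable to $\delta$.

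The core of Domar's method is the following dichotomy. Suppose $v(p_0) = t$ is very large. Since $v$ is subharmonic, its sub-level behavior forces the set $E_t := \{p \in P' : v(p) > t\}$ (for a slightly shrunk $P'$) to contain, near $p_0$, a substantial amount of mass: more precisely, by the sub-mean-value inequality applied on disks of radius roughly $\lambda(E_t \cap D(p_0,\rho))^{1/2}$, one gets that if $v(p_0)=t$ then $\lambda(E_t \cap D(p_0,\rho))$ cannot be too small relative to $\rho^2$ — quantitatively, a bound of the shape $\lambda(E_t) \geq c\,e^{-C(t - M_0)}$ where $M_0 = \sup_{\partial} v$ on a fixed reference circle, or more usefully an iteration: if $v$ exceeds $t$ somewhere, it exceeds $t - 1$ on a set whose measure is bounded below by a fixed constant times the square of the distance to where $v \le t-1$. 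Iterating this $N$ times starting from $p_0$ produces a nested family of sets on which $v > t - k$, $k = 0,\dots,N$, with geometrically controlled measures, all contained in a thin horizontal strip dictated by where $\tilde M(y) > t - N$. The measure of that strip is $\lambda_2(\{(x,y) \in P' : \tilde M(y) > s\}) \le 2a\,\lambda_1(\{y : \tilde M(y) > s\})$, and the integrability hypothesis $\int \log^+\tilde M < \infty$ says exactly that $\lambda_1(\{y : \log\tilde M(y) > \tau\})$ is summable in $\tau$, hence tends to $0$; this is what makes the iteration terminate and yields an a priori upper bound on $t = v(p_0)$ depending only on $\tilde M$ and $\delta$.

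Concretely, the key quantitative step I would isolate as a sub-lemma: there are constants $c_0, c_1 > 0$ (absolute) so that for a subharmonic $v$ on a disk $D(q,\rho)$, if $v(q) > t$ then $\lambda_2(\{v > t - 1\} \cap D(q,\rho)) \ge c_0 \min(\rho^2, c_1)$ — i.e., either $v$ stays above $t-1$ on a definite fraction of the disk, or the disk is small. Running this with $q = p_0$ and shrinking radii, one builds points $p_0, p_1, \dots$ with $v(p_k) > t - k$ and $d(p_k, p_{k+1})$ bounded below as long as the level sets have enough room; since all these points must lie in the strip $\{\tilde M(y) > t - k\}$, whose one-dimensional $y$-measure is small for $t$ large, the points are squeezed and the construction fails unless $t$ is bounded. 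Summing the measure estimates and comparing with $2a \sum_k \lambda_1(\{y : \tilde M(y) > t-k\})$, which by the integrability assumption is $o(1)$ as $t \to \infty$ uniformly, forces $t \le C(\tilde M, \delta)$.

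The main obstacle I expect is making the iteration genuinely quantitative and uniform: one must choose the radii $\rho_k$ at each stage so that the disks $D(p_k,\rho_k)$ stay inside $P'$ (hence $\rho_k$ cannot shrink too fast, else the conclusion is vacuous) while simultaneously the accumulated level sets remain confined to an increasingly thin strip; balancing these requires care, and the constant $C$ must be extracted as an explicit function of the distribution function $y \mapsto \lambda_1(\{\tilde M > s\})$ and of $d(K,\partial P)$. A secondary technical point is the regularity reduction at the start: $\tilde M$ is only assumed measurable with $\int\log^+\tilde M < \infty$, so I would replace it by the decreasing rearrangement in $|y|$ or by a majorizing step function to get a clean distribution function, at the cost of an arbitrarily small loss that is absorbed into $C$. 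Once these are handled, Lemma \ref{subharmonic lemma} follows, and Theorem \ref{Levinson} is immediate by taking $v = \log|f|$ and $\tilde M = \log M$, noting $\int \log^+\log^+ M < \infty$ is precisely $\int \log^+ \tilde M < \infty$ on the set where $\tilde M \ge 0$, the rest contributing nothing.
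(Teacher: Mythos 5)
You have correctly identified the method (Domar's argument) and several of its ingredients: the distribution function $F(s)=\lambda_1(\{y:\tilde M(y)\ge s\})$, the confinement of super-level sets of $v$ to horizontal strips, and the equivalence of $\int\log^+\tilde M<+\infty$ with $\sum_i F(2^i)<+\infty$. But the quantitative core of your argument is wrong. Your key sub-lemma --- ``if $v(q)>t$ then $\lambda_2(\{v>t-1\}\cap D(q,\rho))\ge c_0\min(\rho^2,c_1)$ with \emph{absolute} constants'' --- is false, and here is a structural reason why: if it were true, the whole lemma would follow in one step with no iteration and no logarithmic integral. Indeed, $\{v>t-1\}$ lies in the strip $\{(x,y):\tilde M(y)>t-1\}$, so $\lambda_2(\{v>t-1\}\cap D(q,\rho))\le 2\rho F(t-1)$; combined with your lower bound this forces $F(t-1)\ge c_0\min(\rho^2,c_1)/(2\rho)$, and since $F(s)\to 0$ as $s\to\infty$ whenever $\tilde M<\infty$ a.e., this would bound $t$ using only finiteness of $\tilde M$ --- contradicting the known sharpness of the Levinson theorem (cited in the paper). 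The sub-mean-value inequality only gives $\lambda_2(\{v>t-1\}\cap D)\ge \pi\rho^2/(\sup_D v-t+1)$, i.e., the lower bound degrades with $\sup_D v$, which is precisely the quantity you are trying to control; using it as an a priori constant is circular. Domar's actual mechanism is the \emph{contrapositive}: choosing the radius $r=\frac{8}{\pi}F(s/2)$ proportional to the strip height makes $\lambda_2(B\cap\{v\ge s/2\})\le 2rF(s/2)=\frac{\pi r^2}{4}$, and then $s/2\le\frac{1}{\lambda_2(B)}\int_{B\cap\{v\ge s/2\}}v\le\frac14\sup_B v$ forces $\sup_B v\ge 2s$. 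The conclusion of the one-step lemma is a \emph{doubling of the value} at a nearby point, not a lower bound on a measure.

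The iteration is also set up in the wrong direction. You decrement the level additively ($v>t-k$) and hope the sets get squeezed; but as $k$ grows the confining strip $\{\tilde M>t-k\}$ \emph{widens}, and the relevant sum $\sum_k F(t-k)\approx\int_0^t F(s)\,ds=\int(\min(\tilde M,t))^+\,dy$ is not controlled by $\int\log^+\tilde M<\infty$, which only controls dyadic levels. The correct iteration goes upward multiplicatively: pick $C$ with $\sum_{i\ge-1}F(2^iC)<\frac{\pi}{8}d(K,\partial P)$; if $v(z_0)>C$ for some $z_0\in K$, the doubling step produces points $z_i$ with $v(z_i)\ge 2^iC$ and $d(z_i,z_{i+1})\le\frac{8}{\pi}F(2^{i-1}C)$; the displacements are summable and total less than $d(z_0,\partial P)$, so the $z_i$ converge to a point $z\in P$ where upper semicontinuity gives $v(z)=+\infty$, which is impossible for a subharmonic function. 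Both the pointwise step and the combinatorics of the iteration must be reversed in this way for the proof to close; your final reduction of Theorem \ref{Levinson} to the lemma via $v=\log|f|$, $\tilde M=\log^+ M$ is fine.
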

 \begin{proof}[Sketch of the proof]

  Let $F(t):= \lambda_1(\{y \in (-b,b)\colon \tilde M(y)\geq t\})$ denote the complementary cumulative distribution function of $\tilde M(y)$. The logarithmic integral condition $\int_{-b}^{b}\log^{+}\tilde M(y)dy<+\infty$ can be reformulated in terms of $F$, namely $\sum\limits_{i=0}^{+\infty} F(2^i)<+\infty$ if
 $\int_{-b}^{b}\log^{+}\tilde M(y)dy<+\infty$ (see \cite{K88}, p.378--379). Then there exists a positive number $C$ such that 
\vspace{-5pt}
\begin{equation}
\label{DE} \sum\limits_{i=-1}^{+\infty} F( 2^i C)<\frac{\pi}{8} d(K,\partial P). 
\end{equation}

Our aim is to show that $\sup\limits_{K} v \leq C$. Assume the contrary. Suppose there is $z_0\in K$ with $v(z_0)>C$.
 Let $A_t$ denote the set $\{z \in P: u(z) \geq t \}$. 

\begin{proposition*}
 If a point $z \in P$ satisfies $v(z) \geq \C$ with $\C>0$,  and $d(z, \partial P) > \frac{8}{\pi}F(\C/2)$, then there is a $\zeta \in P$ such that $d(z,\zeta)\leq \frac{8}{\pi}F(\C/2)$ and $v(\zeta) \geq 2\C$.
\end{proposition*}
 Consider the ball $B$ centered at $z$ with radius $r=\frac{8}{\pi}F(\C/2)$, note that $B\in P$, since $d(z, \partial P) > \frac{8}{\pi}F(\C/2)$. Now, the subharmonicity of $v$ will be exploited:
$\C\leq v(z) \leq \frac{1}{\lambda_2(B)} \int\limits_{B} v = \frac{1}{\lambda_2(B)} ( \int\limits_{B \setminus A_{\C/2}} v + \int\limits_{B \cap A_{\C/2}} v) \leq
\C/2 +  \frac{1}{\lambda_2(B)} \int\limits_{B \cap A_{\C/2}} v$.
 Hence $\C/2 \leq \frac{1}{\lambda_2(B)} \int\limits_{B \cap A_{\C/2}} v \leq \frac{1}{\pi r^2}  \sup\limits_{B} v  \cdot \lambda_2(B \cap A_{\C/2}) \leq \frac{1}{\pi r^2}  \sup\limits_{B} v  \cdot \lambda_1(\{x| \quad \!\!\! \exists\quad \!\!\!  y:(x,y) \in B \cap A_{\C/2}\}) \cdot \lambda_1(\{y: \quad \!\!\!\exists \!\!\! \quad x: (x,y) \in B \cap A_{\C/2}\}) \leq \frac{1}{\pi r^2}  \sup\limits_{B} v  \cdot 2r F(\C/2)= \frac{1}{4} \sup\limits_{B} v $. 
Thus $2C \leq \sup\limits_{B} v $ and the proposition is proved.


Using the proposition and taking $z_0$ in place of $z$ and $C$ in place of $\C$ we obtain a point $z_1$ such that $v(z_1) \geq 2C$ and $d(z_1,z_0)\leq \frac{8}{\pi}F(C/2)$.  Recall that $d(z_0, \partial P) >  \frac{8}{\pi} \sum\limits_{i=-1}^{+\infty} F( 2^i C)$, hence $d(z_1,\partial P) > \frac{8}{\pi} \sum\limits_{i=0}^{+\infty} F( 2^i C)$.  
 Exploiting the proposition infinitely many times we obtain a sequence $\{ z_i \}_{i=0}^{+\infty}$ such that $v(z_i) \geq 2^i C$ and $d(z_i, z_{i+1}) \leq \frac{8}{\pi}F(2^{i-1}C)$.
 By \eqref{DE} $z_i$ have a limit point $z\in P$,  hence $v(z)\geq \lim\limits_{i \to \infty} v(z_i)=+\infty$, and  a contradiction is obtained.
\end{proof}
 \begin{remark} \label{estimateD}
 Domar's argument also provides explicit estimates in Threorem \ref{Levinson} of $C(M,d(K,\partial P))$. Define $F(t)$  by $F(t)=\lambda_1(\{ y : \log^+M(y)\geq t\})$. If $C>0$ and $d(K, \partial P) >  \frac{8}{\pi} \sum\limits_{i=-1}^{+\infty} F( 2^i C)$ , then $|f| \leq \exp(C)$ on $K$.
\end{remark}


\section{Axially-Symmetric Harmonic Functions.}

   Consider $\R^n=\{x= (x_1,\dots, x_n): x_i \in \R \}$.
 By $\rho$ we denote $\sqrt{\sum\limits_{i=1}^{n-1} x_i^2}$ and $h:=x_n$.  A function $u$ defined in $\R^n$ is called \sym if $u=u(\rho,h)$, i.e. $u$ is invariant under orthogonal transforamtions of the first $(n-1)$ coordinates.   An \sym harmonic function $u$ satisfies the elliptic Euler-Darboux equation:
\begin{equation} \label{ED}
 \frac{\partial^2 u}{\partial \rho^2}  + \frac{\partial^2 u}{\partial h ^2}  + \frac{n-2}{\rho}\frac{\partial u}{\partial \rho}=0. 
\end{equation} 
 We are going to use two ideas. The first one reduces \sym harmonic functions in $\R^4$ to ordinary harmonic functions in $\R^2$. The second trick reduces \sym harmonic functions in $\R^{2k+3}$ to harmonic functions in $\R^3$. It will help in dimension $n \geq 5$.  We refer the reader to \cite{A87}, \cite{H96}, \cite{E56}, \cite{E65}, \cite{K}, \cite{H91}, \cite{R68}, \cite{R74}, \cite{W48}, \cite{W53} and references therein, where these and related ideas  appear in a different context, however we are not able to locate their origin. 

\subsection{From $\mathbb{R}^4$ to $\mathbb{R}^2$  \label{from R4} } 

  Suppose $u$ is an \sym harmonic function in an \sym domain $\Omega \subset \mathbb{R}^4$. Consider the set $\tilde \Omega_+ \subset \mathbb{R}^2 $ defined by  $x \in \Omega \iff (\rho(x), h (x)) \in \tilde \Omega_+ . $
 It is easy to see from \eqref{ED} that the function 
\begin{equation} \label{R4}
\tilde u(\rho,h) = \rho u(|\rho|, h)
\end{equation}

 is harmonic in Int $\tilde \Omega_+$. Define ${\tilde\Omega_-}$ by $x \in \Omega \iff (-\rho(x), h (x)) \in \tilde \Omega_- . $ Let $\tilde \Omega$ be the union of $\tilde \Omega_+$ and $\tilde \Omega_-$. Then $\tilde \Omega$ is a  domain in $\mathbb{R}^2$, symmetric with respect to the line $\rho=0$. By the Schwarz reflection principle we see that \eqref{R4} defines an odd (with respect to $\rho$) harmonic function in $\tilde \Omega$.

\subsection{From $\mathbb{R}^{2k+3}$ to $\mathbb{R}^3$. \label{from 2k+3}} 
 Let $u=u(\rho,h)$ be an axially-symmetric harmonic function in $\mathbb{R}^{2k+3}$.
 Put
\begin{equation} \label{2k+3} v(\varphi, \rho,h)= \rho^k e^{ik\varphi}u(\rho,h), \end{equation}
 where $(\varphi, \rho,h)$ are cylindrical coordinates in $\mathbb{R}^3$. Then $v$ is  a harmonic (complex-valued) function in $\R^3$.  Indeed, 
 $$\Delta v = \frac{\partial^2 v}{\partial\rho^2} + \frac{1}{\rho}\frac{\partial v}{\partial\rho} +\frac{1}{\rho^2}\frac{\partial^2 v}{\partial\varphi^2} + \frac{\partial^2 v}{\partial h^2}=0+\rho^k e^{ik\varphi}(\frac{\partial^2 u}{\partial \rho^2}  + \frac{\partial^2 u}{\partial h ^2}  + \frac{2k+1}{\rho}\frac{\partial u}{\partial \rho} )=0.$$
 The last argument shows that $v$ is harmonic in $\R^3 \setminus \{ \rho =0 \}$. Note that $v$ is continuous up to the line $\{ \rho=0\}$, which is a removable singularity for bounded harmonic functions (see \cite{ABR},p.200). Thus $v$ is harmonic in $\R^3$.   
\section{ Proof of Theorem \ref{main}.}
\subsection{ Proof of the case $n=4$.}

 Fix $\varepsilon>0$: $R,H>\varepsilon$. Take any $x_0\in \R^{n-1}$  with $|x_0|<R-\varepsilon$. Consider any function $u$ from $\HH_M$. It is sufficient to show that there is $C=C(M,H,\varepsilon)$ such that $|u(x_0,h)|\leq C$ for any $h$: $|h|< H - \varepsilon$. Denote the set $ \{(x,y)\colon x \in \R^{n-1}, y \in \R, |x|<\varepsilon, |y|< H)\}$ by $P_\varepsilon$ and consider the function $\tilde u: P_\varepsilon \to \R$ defined by $\tilde u(x,y)= u(x-x_0,y)$. Note that $|\tilde u(x,y)| \leq M(|y|)$ on $P_\varepsilon$. 

Let us make an axial symmetrization step. Denote by $O(3)$ the group of orthogonal transformations in $\R^3$, let $dS$ be the Haar measure on $O(3)$. For any $g \in O(3)$  we use the notation $\tilde u_g$ for the function $\tilde u(gx,y)$. It is clear that $\tilde u_g$ is harmonic in $P_\varepsilon$, $\tilde u_g(0,y)=\tilde u(0,y)=u(x_0,y)$ and $|u_g(x,y)|\leq M(|y|)$  on $P_\varepsilon$.
 Put $w(x,y):= \int_{O(3)} u_g(x,y)dS(g)$, $(x,y)\in P_\varepsilon$, it is evident that $w$ also enjoys the  properties from the preceding sentence and $w=w(\rho,h)$ is axially-symmetric. We have reduced $4$-dimensional case to the following lemma.
\begin{lemma}
 Suppose $w=w(\rho,h)$ is an axially-symmetric harmonic function in the truncated cylinder $P_\varepsilon$ and $|w(x,y)| \leq M(|y|)$, then there is a constant $C=C(M,H,\varepsilon)$ such that $|w(0,y)|< C$ for any $y \in (-H+\varepsilon, H- \varepsilon)$.
\end{lemma}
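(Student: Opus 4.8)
The plan is to reduce to the holomorphic Levinson Theorem~\ref{Levinson} via the $\R^{4}\to\R^{2}$ construction of Section~\ref{from R4}, applied not to $w$ but to the \emph{gradient} of the associated planar harmonic function. Passing to the $(\rho,h)$-plane turns $P_\varepsilon$ into the rectangle $Q:=(-\varepsilon,\varepsilon)\times(-H,H)$, and by \eqref{R4} and the Schwarz reflection principle
\[
\tilde w(\rho,h):=\rho\,w(|\rho|,h)
\]
is harmonic in $Q$, odd in $\rho$, with $|\tilde w(\rho,h)|\le\varepsilon M(|h|)$ on $Q$. Since $\tilde w$ is harmonic in the plane, $g:=\partial_\rho\tilde w-i\,\partial_h\tilde w$ is holomorphic in $Q$ and $|g|=|\nabla\tilde w|$. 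As $\tilde w$ is smooth and vanishes on the axis $\{\rho=0\}$, $\partial_\rho\tilde w(0,h)=\lim_{\rho\to0^{+}}\tilde w(\rho,h)/\rho=\lim_{\rho\to0^{+}}w(\rho,h)=w(0,h)$, while $\partial_h\tilde w(0,h)=0$ by oddness in $\rho$; thus $w(0,h)=g(0,h)$. It therefore suffices to bound $|g(0,h)|$ for $|h|<H-\varepsilon$, and this I would do by bounding $g$ on a slightly smaller rectangle and invoking Theorem~\ref{Levinson}.

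To build a Levinson majorant for $g$ I would run the interior gradient estimate $|\nabla v(p)|\le\frac{c}{r}\sup_{B(p,r)}|v|$ ($v$ harmonic on $B(p,r)$, $c$ absolute) off the crude bound $|\tilde w|\le\varepsilon M(|h|)$. For $(\rho,h)$ with $|\rho|\le\varepsilon/2$ and $0<|h|<H-\varepsilon/4$, the disc $B\bigl((\rho,h),r\bigr)$ with $r:=\min(\varepsilon/4,|h|/2)$ lies in $Q$ and stays off the segment $\{h'=0\}$, so, since $M$ is decreasing,
\[
|g(\rho,h)|=|\nabla\tilde w(\rho,h)|\le\frac{c\varepsilon}{r}\,M(|h|/2)\le N(h),\qquad N(h):=\frac{c'\varepsilon}{\min(\varepsilon,|h|)}\,M(|h|/2)
\]
with $c'$ absolute. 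Shrinking $r$ near $h=0$ is essential, since $M$ may be unbounded there and a fixed radius would make the majorant infinite on an entire interval. Putting $N(0):=+\infty$ — harmless, as Domar's argument sees a majorant only through its distribution function and $\{0\}$ is a null set — one has $|g|\le N$ on $R_{1}:=(-\varepsilon/2,\varepsilon/2)\times(-(H-\varepsilon/4),H-\varepsilon/4)$, and
\[
\int_{-(H-\varepsilon/4)}^{H-\varepsilon/4}\log^{+}\log^{+}N(h)\,dh<+\infty,
\]
because $\log^{+}\log^{+}N(h)$ is dominated by $\log^{+}\log^{+}M(|h|/2)+\log^{+}\log^{+}(1/|h|)+O_{\varepsilon}(1)$, the first summand being integrable by \eqref{loglog} after the substitution $s=|h|/2$, and $\int_{0}^{1}\log^{+}\log^{+}(1/h)\,dh<+\infty$.

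Finally I would apply Theorem~\ref{Levinson} (equivalently, Remark~\ref{estimateD}) to the holomorphic function $g$ on the rectangle $R_{1}$ with majorant $N$, and to the compact set $K:=\{(\rho,h):|\rho|\le\varepsilon/4,\ |h|\le H-\varepsilon\}\subset R_{1}$, for which $d(K,\partial R_{1})=\varepsilon/4$. This produces a constant $C_{1}=C_{1}(N,\varepsilon)=C_{1}(M,H,\varepsilon)$ with $|g|\le e^{C_{1}}$ on $K$, whence $|w(0,h)|=|g(0,h)|\le e^{C_{1}}=:C$ for every $h$ with $|h|<H-\varepsilon$, as claimed.

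The step I expect to demand the most care is the construction of the majorant $N$ near $h=0$, where two obstacles meet. First, $\log|\tilde w|$ is \emph{not} subharmonic for a real planar harmonic function, so one is forced onto the holomorphic gradient $g$, for which $\log|g|=\log|\nabla\tilde w|$ \emph{is} subharmonic — the very property that fails in dimension $\ge3$ (cf.\ the introduction). Second, $M(|h|)$ may blow up as $h\to0$, which forces the radius of the gradient estimate to shrink like $|h|$; absorbing the ensuing factor $1/|h|$ into the logarithmic-integral condition is exactly where \eqref{loglog} together with the elementary integrability of $\log\log(1/h)$ near $0$ are used. The remaining ingredients — the Euler--Darboux computation and Schwarz reflection behind \eqref{R4}, the interior gradient estimate, and the constant bookkeeping — are routine.
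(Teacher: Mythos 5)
Your proposal is correct and follows essentially the same route as the paper: pass to $\tilde w(\rho,h)=\rho\,w(|\rho|,h)$ harmonic on the rectangle, form the holomorphic gradient $g=\partial_\rho\tilde w-i\partial_h\tilde w$, majorize $|g|$ by $\mathrm{const}\cdot\min(\varepsilon,|h|)^{-1}M(|h|/2)$ via interior gradient estimates on discs of radius comparable to $|h|$ near the axis $h=0$, verify the $\log\log$ integrability of this majorant, and apply Theorem~\ref{Levinson} to recover $w(0,h)=\partial_\rho\tilde w(0,h)$. The only differences from the paper's proof are cosmetic (explicit tracking of the factor $\varepsilon$ in $|\tilde w|\le\varepsilon M(|h|)$ and the slightly different choice of intermediate rectangle).
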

 \textbf{Proof.}
   Put $v(\rho,h):=\rho w(|\rho|,h)$, by Section \ref{from R4} $v$ is harmonic in $(-\varepsilon,\varepsilon) \times (-H, H) $. Denote $\rho+ih$ by $\zeta$, and  $ \frac{\partial v}{\partial \rho} - i \frac{\partial v}{\partial h}$ by $f$, then $f$ is a holomorphic function in $(-\varepsilon,\varepsilon) \times (-H, H)$. Denote the set $(-\varepsilon/2,\varepsilon/2)\times (-H+\varepsilon/2, H - \varepsilon/2 )$ by $\tilde P_{\varepsilon/2}$. 

Take any $\zeta=(\rho,h) \in \tilde P_{\varepsilon/2}$ with $h \leq \varepsilon$ and consider a disk $B_{h/2}(\zeta):=\{z: |z-\zeta|< h/2 \}$. Since $|u(\rho,h)|\leq M(|h|)$ and $M$ is decreasing $\sup\{|v|(x)\colon x \in B_{h/2}(\zeta) \} \leq  M(h/2)$. Applying standard Cauchy's estimates of derivatives of harmonic functions we obtain $|\nabla v|(\zeta) \leq C_1 \frac{\sup\{|v|(x)\colon x \in B_{h/2}(\zeta) \}}{h/2} \leq C_2 \frac{M(h/2)}{h}$, by $C_1,C_2, C_3 $ we will denote absolute constants, whose value is  less than $100$. We note that $|f|=|\nabla v|$, hence $|f|(\zeta)\leq C_2 \frac{M(h/2)}{h}$.

 If $\zeta \in \tilde P_{\varepsilon/2}$ with $h \geq \varepsilon$, then $B_{\varepsilon/4}(\zeta) \subset (-\varepsilon,\varepsilon) \times (-H, H)$. Using in a similar way Cauchy's estimates we obtain $|f(\zeta)| \leq C_3 \frac{M(h/2)}{\varepsilon}$. We therefore have $|f(\zeta)| \leq \max(\frac{100}{\varepsilon}, \frac{100}{h})M(h/2)$ for any $\zeta \in \tilde P_{\varepsilon/2}$.    Denote $ \max(\frac{100}{\varepsilon}, \frac{100}{h})M(h/2)$ by $\tilde M(h)$. It follows from the inequality $ \log^+{a}+\log^+{b}+\log2 \geq \log^+(a+b)$ that $\int\limits_{-H}^{H}\log^{+}\log^{+}M(y)dy<+\infty$ implies  $\int\limits_{  -H+\varepsilon/2}^{ H-\varepsilon/2}\log^{+}\log^{+} \tilde M(y)dy<+\infty$.

Now, we are in a position to  apply Theorem \ref{Levinson} to the function $f$ holomorphic in $\tilde P_{\varepsilon/2}$ with the majorant $\tilde M$, that gives us  a positive constant $C=C(M,H,\varepsilon)$: $|f(0,h)|<C$ for $h \in (-H+\varepsilon, H-\varepsilon)$. Recall that $ v(\rho,h)= \rho \tilde  u(\rho,h)$, it yields $|u(0,h)|=|v_\rho(0,h)|\leq|f| \leq C(M,H,\varepsilon)$.

\begin{remark} Let $\tilde F(t)$ denote $\lambda_1( \{h\in (-H+\varepsilon/2, H -\varepsilon/2): \max(\frac{100}{\varepsilon}, \frac{100}{h})M(h/2) \geq \exp(t)\})$, then $C(M,H,\varepsilon)$ can be given explicitly in terms of $\tilde F$ in view of Remark \ref{estimateD}. Namely, if $\varepsilon/2 >  \frac{8}{\pi} \sum\limits_{i=-1}^{+\infty} \tilde F( 2^i C)$ for a positive constant $C$, then $u(x,y) \leq \exp(C)$ for all $(x,y)$ with $|x|\leq R-\varepsilon, |h| \leq H-\varepsilon$ .
\end{remark}
\begin{remark}
    The $4$-dimensional case of Theorem \ref{main} implies the $3$-dimensional one (as well as the $2$-dimensional) because we can always add a fake coordinate to $\R^3$. 
\end{remark}
\subsection{  Proof of the case $n\geq 5$.} We will consider only the case of odd $n=2k+3$. Now, we know that Theorem \ref{main} holds for $n=2,3,4$. We will prove the case of odd $n=2k+3$ reducing it to the case $n=3$ with the help of idea discussed in Section \ref{from 2k+3}. The case of even $n$ follows immediately.
    Like in the proof of $4$-dimensional case we can perform the axial-symmetrisation step and Theorem \ref{main} is reduced to the following lemma.
\begin{lemma}
Suppose $u=u(\rho,h)$ is an axially-symmetric harmonic function in a truncated cylinder $P_\varepsilon= \{(x \in \R^{n-1}, y \in \R, |x|<\varepsilon, |y|< H)\}$ such that $|u(x,y)| \leq M(|y|)$. Then there is a constant $\EuScript{C}=\EuScript{C}(n,M,H,\varepsilon)$ such that $|u(0,y)|< \EuScript{C}$ for $y \in (-H+\varepsilon, H- \varepsilon)$.
\end{lemma}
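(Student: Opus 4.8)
The plan is to mimic the $4$-dimensional argument, using the reduction of Section \ref{from 2k+3} in place of the reduction of Section \ref{from R4}. Given the axially-symmetric harmonic function $u=u(\rho,h)$ on $P_\varepsilon \subset \R^{2k+3}$, I would form the complex-valued function $v(\varphi,\rho,h)=\rho^k e^{ik\varphi}u(\rho,h)$, which by Section \ref{from 2k+3} is harmonic in the corresponding truncated cylinder $Q_\varepsilon \subset \R^3$ (using cylindrical coordinates $(\varphi,\rho,h)$), i.e. on $\{(\xi,h): \xi \in \R^2, |\xi|<\varepsilon, |h|<H\}$ where $|\xi|=\rho$. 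On $Q_\varepsilon$ we have $|v| \leq \varepsilon^k M(|h|)$, and since $k$ is fixed this new majorant still satisfies the $\log^+\log^+$ integrability condition (the extra factor $\varepsilon^k$ only shifts things by a bounded amount, using $\log^+(ab)\leq \log^+a+\log^+b+\log 2$ as in the $n=4$ case). The point is that $v$ is a harmonic function on a domain in $\R^3$, and Theorem \ref{main} is already known for $n=3$; applying it gives a bound $|v| \leq C(k,M,H,\varepsilon)$ on the smaller cylinder $Q_{\varepsilon/2}$, say on $\{|\xi|<\varepsilon/2,\ |h|<H-\varepsilon/2\}$.

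The remaining task is to pass from a bound on $v$ back to a bound on $u(0,y)$. Here the difficulty is that $v$ vanishes on $\{\rho=0\}$ to order $k$, so a pointwise bound on $v$ near $\rho=0$ carries almost no information about $u(0,h)$ directly. Instead I would recover $u(0,h)$ by differentiating $k$ times in $\rho$: since $v(\varphi,\rho,h)=\rho^k e^{ik\varphi}u(\rho,h)$ with $u$ smooth (real-analytic, being harmonic) and even in $\rho$ in the appropriate sense, we have $u(0,h)=\frac{1}{k!}\,e^{-ik\varphi}\,\partial_\rho^k v(\varphi,0,h)$ for any fixed $\varphi$. To estimate this $k$-th derivative I would apply interior derivative estimates for harmonic functions (Cauchy-type estimates, as already used in the $n=4$ proof): on a ball $B_r(\zeta) \subset Q_{\varepsilon/2}$ centered at a point $\zeta$ with $\rho=0$, $|h|<H-\varepsilon$, one can take $r$ comparable to $\min(\varepsilon, \mathrm{dist}(h, \pm(H-\varepsilon/2)))$, bounded below by a constant depending on $\varepsilon$; then $|\partial_\rho^k v(\zeta)| \leq \frac{C_k}{r^k}\sup_{B_r(\zeta)}|v| \leq \frac{C_k}{r^k} C(k,M,H,\varepsilon)$, and hence $|u(0,h)| \leq \EuScript{C}(n,M,H,\varepsilon)$ as desired.

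Let me restate the logical skeleton in order: (1) perform the axial-symmetrization step exactly as in the $n=4$ case, so it suffices to prove the lemma for axially-symmetric $u$; (2) lift $u$ to the harmonic function $v=\rho^k e^{ik\varphi}u$ on a truncated cylinder in $\R^3$, checking that the singularity along $\{\rho=0\}$ is removable (boundedness of $v$ near that line, as noted in Section \ref{from 2k+3}); (3) verify $|v|\leq \varepsilon^k M(|h|)$ and that this majorant satisfies \eqref{loglog}; (4) invoke the already-established $n=3$ case of Theorem \ref{main} to bound $v$ on a slightly smaller cylinder; (5) use interior (Cauchy) derivative estimates to bound $\partial_\rho^k v$ on the axis $\{\rho=0\}$, and hence $u(0,h)=\frac{1}{k!}e^{-ik\varphi}\partial_\rho^k v|_{\rho=0}$. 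Finally, since the case of even $n$ is obtained by adjoining a fake coordinate to reduce to the next odd dimension (or directly to $n=3$ after symmetrization), this completes Theorem \ref{main} for all $n$.

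I expect step (5) — the recovery of $u(0,h)$ from $v$ via the $k$-th $\rho$-derivative — together with the bookkeeping that the constant $\EuScript{C}$ depends on $n$ (through $k$) in a controlled way, to be the main obstacle; the rest is a faithful repetition of the $4$-dimensional scheme. One subtlety worth checking carefully is that $u$, being harmonic and axially-symmetric, extends to an even real-analytic function of $\rho$ across $\{\rho=0\}$, so that the Taylor expansion of $v$ in $\rho$ at $\rho=0$ indeed has leading term $\rho^k e^{ik\varphi}u(0,h)$ and the formula for $u(0,h)$ in terms of $\partial_\rho^k v$ is legitimate; this is where equation \eqref{ED} and the removable-singularity discussion of Section \ref{from 2k+3} are really used.
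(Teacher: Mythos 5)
Your proposal follows the paper's own proof essentially verbatim: lift $u$ to the harmonic function $v=\rho^k e^{ik\varphi}u$ on a truncated cylinder in $\R^3$ (the paper takes $\Re$ of this, which is immaterial), bound $v$ via the already-established $n=3$ case of Theorem \ref{main}, and recover $u(0,h)=\frac{1}{k!}\partial_\rho^k v|_{\varphi=\rho=0}$ by interior derivative estimates on a ball of radius comparable to $\varepsilon/2$ centered on the axis. The approach and all the key steps, including the Leibniz computation of the $k$-th $\rho$-derivative and the dependence of the constant on $k$, coincide with the paper's argument.
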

  Following Section \ref{from 2k+3} we consider a function $v$ defined by $v(\varphi,\rho,h)= Re( \rho^k e^{ik\varphi} u(\rho,h))$ on the set $\{\varphi \in [0,2\pi), \rho \in [0,\varepsilon), h \in (-H+\varepsilon,H+\varepsilon)  \}$, where $v$ is harmonic. With the help of a $3$-dimensional case of Theorem \ref{main} we can obtain $ |v(\varphi,\rho,h)|<C(M,H,\varepsilon/2)$ for $\varphi \in [0,2\pi), \rho \in [0,\varepsilon/2), h \in (-H+\varepsilon/2,H-\varepsilon/2)$. Then for any $h \in (-H+\varepsilon,H-\varepsilon)$  and the ball $B$ centered at the point $(0,0,h)$ with radius $\varepsilon/2$ we have $\sup\limits_{B} |v| \leq C(M,H,\varepsilon/2)$.  Applying standard estimates of the higher derivatives of harmonic functions we obtain $\frac{\partial^k}{\partial \rho^k}v \leq C(k) \frac{C(M,H,\varepsilon/2)}{(\varepsilon/2)^k}$ on the set $\{ \varphi\in [0,2\pi), \rho =0, h \in (-H+\varepsilon/2, H-\varepsilon/2) \}$ , where $C(k)$ is a constant depending only on dimension ($n=2k+3$). Take $\varphi=\rho=0$ and see that $\frac{\partial^k v}{\partial \rho^k}(0,0,h)= k! u(0,h)$. Thus $|u(0,h)| \leq C(k) \frac{C(M,H,\varepsilon/2)}{(\varepsilon/2)^k}$ for $h \in (-H+\varepsilon, H+\varepsilon)$.
\section{Application to the universal polynomial expansions of harmonic functions.}
  Consider the unit ball $\mathbb{B}:=B_1(0)$ in $\R^n$. Any function $h$ harmonic in $\mathbb{B}$ admits  power series expansion $h=\sum_{n=0}^{+\infty} h_n$, where $h_n$ is a homogeneous harmonic polynomial of degree $n$. It is said that $h$ belongs to the collection ${U}_H$, of harmonic functions in $B$ with universal homogeneous polynomial expansions, if for any compact set $K\subset \R^n \setminus \mathbb{B}$ with connected complement and any harmonic function $u$ in a neighbourhood of $K$, there is a subsequence $\{N_k\}$ of $\mathbb{N}$ such that $\sum^{N_k}_0 h_n \to u$ uniformly on $K$. This class of universal functions has been studied in \cite{M13}, \cite{GK14},  \cite{GT06}, \cite{BGNP}. The following statement improves Theorem $7$ from \cite{GK14} on the boundary behavior of functions from $U_H$ .
\begin{theorem} \label{universality}
 Let $\psi:[0,1) \to \R^+$ be an increasing function such that $\int_0^1 \log^+\log^+ \psi(t) dt < +\infty$.
 If $h=\sum_{n=0}^{+\infty} h_n$ enjoys $|h(x)| \leq \psi(|x|)$ on $B_r(\omega)\cap \mathbb{B}$ for some $\omega \in \partial \mathbb{B}$ and $r>0$, then $f \notin U_H$.
\end{theorem}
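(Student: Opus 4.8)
The plan is to deduce Theorem \ref{universality} from Theorem \ref{main} by a localization and change-of-variables argument. First I would fix $\omega \in \partial \mathbb{B}$, $r>0$ and suppose, for contradiction, that $h = \sum_{n=0}^{+\infty} h_n$ lies in $U_H$ and satisfies $|h(x)| \le \psi(|x|)$ on $B_r(\omega) \cap \mathbb{B}$. The key point is that membership in $U_H$ forces $h$ to have wild boundary behaviour: since $K \subset \R^n \setminus \mathbb{B}$ with connected complement may be chosen arbitrarily, a universal function cannot extend harmonically across any boundary arc, and in fact the partial sums $\sum_0^{N_k} h_n$ cannot be locally uniformly bounded near any boundary point of $\mathbb{B}$. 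I would first extract a clean consequence of universality, namely that $h$ is \emph{not} bounded by any admissible majorant near $\omega$ — more precisely, that there is no neighbourhood of $\omega$ on which $h$ is bounded by a function $M(\operatorname{dist}(\cdot,\partial\mathbb{B}))$ with $M$ decreasing and satisfying the $\log\log$ condition \eqref{loglog}. This is the step where one invokes Theorem $7$ of \cite{GK14} (or reproves its mechanism): universality of $h$ implies that $h$ cannot be continued, and the quantitative version says the growth near the boundary must be faster than any such majorant.

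Next I would set up the coordinate reduction that turns "distance to the sphere $\partial\mathbb{B}$" into "distance to a hyperplane", so that the slab geometry of Theorem \ref{main} applies. Working in a neighbourhood of $\omega$, choose an orthonormal frame so that $\omega = (0,\dots,0,1)$ and write points near $\omega$ as $(x', t)$ with $x' \in \R^{n-1}$ small and $t$ near $1$; then $1 - |x| \asymp (1-t) + |x'|^2/2$, and on the region $|x'| < \delta$, $|1-t| < \delta^2$ we have $1-|x| \le C(1-t) =: C|y|$ after setting $y = 1-t$ (and shrinking constants). Thus on a box $\Omega$ of the form $\{|x'| < R,\ |y| < H\}$ the hypothesis $|h| \le \psi(|x|) \le \psi(1 - c|y|)$ — using that $\psi$ is increasing — gives $|h(x',y)| \le \psi(1 - c|y|) =: \tilde M(|y|)$, and $\tilde M$ is decreasing in $|y|$. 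The $\log^+\log^+$ integrability of $\psi$ on $[0,1)$ transfers, after the linear substitution $s = 1 - cy$, to $\int_0^H \log^+\log^+ \tilde M(y)\,dy < +\infty$, exactly condition \eqref{loglog}. Here one must be slightly careful with the quadratic term $|x'|^2/2$: on the box it is $\le R^2/2$, a bounded perturbation of $1-t$, which changes $\tilde M$ only by composing with $y \mapsto y + O(R^2)$, harmless for the logarithmic integral (again by the elementary inequality $\log^+\log^+(a+b) \le \log^+\log^+ a + \log^+\log^+ b + \log 2$ already used in the paper).

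Now apply Theorem \ref{main} to $u(x',y) := h(x',1-y)$ on $\Omega$ with majorant $\tilde M$: it yields that $h$ is uniformly bounded on every compact subset of the box, in particular on a full neighbourhood of $\omega$ inside $\mathbb{B}$. This contradicts the conclusion of the first step, that universality forbids such local boundedness near $\omega$. Hence $h \notin U_H$.

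The main obstacle I expect is the first step — pinning down precisely what universality rules out — rather than the geometric reduction, which is routine. One needs that $h \in U_H$ cannot be bounded near \emph{any} boundary point by an admissible Levinson majorant; the cleanest route is to argue that if it were, Theorem \ref{main} would give a harmonic extension-type bound strong enough to contradict the non-continuability built into the definition of $U_H$ (choosing $K$ to be a suitable closed ball just outside $\mathbb{B}$ touching $\partial\mathbb{B}$ near $\omega$, and $u$ a function that the partial sums cannot approximate if they are locally bounded there, e.g. via a normal-families/Montel argument forcing a convergent subsequence of partial sums and hence genuine harmonic continuation of $h$ across the arc). Getting this incompatibility stated correctly — matching the hypotheses of Theorem $7$ of \cite{GK14} that we are improving — is the delicate bookkeeping; once it is in place, Theorem \ref{main} does all the analytic work.
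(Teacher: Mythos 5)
Your overall strategy --- reuse the machinery of \cite{GK14} for the universality part and feed in the harmonic Levinson theorem as the one new ingredient --- is exactly the route the paper takes (the paper in fact gives no proof beyond this remark). The problem is in your middle step, the reduction of the ball geometry to the slab geometry of Theorem \ref{main}, and it is not a bookkeeping issue but a genuine gap. First, the expansion is $1-|x| = (1-t) - |x'|^2/2 + O(\cdot)$, not $(1-t)+|x'|^2/2$: since $|x|\ge t$, one always has $1-|x|\le 1-t$, so the distance to the sphere is \emph{smaller} than the distance to the tangent hyperplane, which is the useless direction for converting $\psi(|x|)$ into a decreasing majorant of $|y|=1-t$. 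Second, and decisively, no box $\{|x'|<R,\ |y|<H\}$ with $R,H>0$ in your flattened coordinates is contained in $\mathbb{B}$: the portion of the ball near $\omega$ is the paraboloid-like region $|x'|^2\lesssim 2(1-t)$, so points of the box with $y$ small and $|x'|$ of order $R$ lie outside $\mathbb{B}$, where $h$ is not defined and no majorant is available. The quadratic term $|x'|^2/2$ is not a ``bounded perturbation'' of $1-t$; it dominates $1-t$ precisely in the regime that matters, so the $\log^+\log^+(a+b)$ trick does not apply. This is why the paper explicitly says one needs the \emph{ball} version of Theorem \ref{main}, obtained via the Kelvin transform: inversion about a point of $\partial\mathbb{B}$ maps $\mathbb{B}$ onto a half-space, preserves harmonicity after multiplication by $|x|^{2-n}$, and distorts distances to the boundary only by bounded factors locally, after which genuine slabs $\{|x'|<R,\ 0<y<H\}$ do fit inside the image domain and Theorem \ref{main} applies. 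Either carry out that Kelvin-transform reduction, or rerun Domar's argument directly in the ball; as written, your flattening step fails.

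Your first step (what membership in $U_H$ forbids) is left at the level of ``invoke or reprove Theorem 7 of \cite{GK14}''; that is consistent with the paper, which also defers entirely to \cite{GK14} for this part, so I do not count it as a gap, though a complete write-up would have to make the normal-families/continuation argument precise.
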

 We won't prove Theorem \ref{universality} here, because
 all necessary ingredients of the proof with one exception are given in \cite{GK14}, where Theorem \ref{universality} is proved under the stronger assumption $\int_0^1 \log^+ \psi(t) dt < +\infty$ in place of $\int_0^1 \log^+\log^+ \psi(t) dt < +\infty$. The only missing ingredient in \cite{GK14}, which allows to replace one $\log$ by $\log \log$, is the
 "harmonic" analog of the Levinson $\log\log$ theorem in higher dimensions (its version in a ball, which follows from Theorem \ref{main} with the help of Kelvin transform).

 \section{A question on one-sided estimates.}
  Suppose that $z_0$ is a point in a square $Q=(-1,1)\times (-1,1)$ and $M$ is a positive (decreasing and regular) function on $(0,1)$. Under what assumptions on $M$  the family $F^+_M$ of all functions $f$ holomorphic in $Q$, and satisfying $Im (f(z))\leq M(|Im(z)|)$, $f(z_0)= 0$ is normal in $Q$?

%

\section*{Acknowledgments} This research was supported by the Chebyshev Laboratory (Department of Mathematics
and Mechanics, St. Petersburg State University) under the RF Government grant
11.G34.31.0026, and by JSC "Gazprom Neft". We are grateful to D. Khavinson for explaining the application of the "harmonic" analog of the Levinson $\log \log$ theorem to the question on the boundary behavior  of the universal power expansions of harmonic functions. 

\end{document}